\definecolor{darkgrey}{rgb}{0.4,0.4,0.5}
\newtheorem{theorem}{Theorem}[section]
\newtheorem{lemma}[theorem]{Lemma}
\newtheorem{conjecture}[theorem]{Conjecture}
\newtheorem{corollary}[theorem]{Corollary}
\newtheorem{proposition}[theorem]{Proposition}
\newtheorem{proposition-definition}[theorem]{Proposition-Definition}
\newtheorem{remark}[theorem]{Remark}
\numberwithin{equation}{section}
\newcommand\EatDot[1]{}
\newcommand{\cyc}{{\mathrm{cyc}}}
\newcommand{\cH}{{\mathcal{H}}}
\newcommand{\Gal}{{\mathrm{Gal}}}
\newcommand{\Ker}{{\mathrm{Ker}}}
\newcommand{\Coker}{{\mathrm{Coker}}}
\newcommand{\Sel}{{\mathrm{Sel}}}
\newcommand{\Q}{{\mathbb Q}}
\newcommand{\Z}{{\mathbb Z}}
\newcommand{\Sss}{S^\mathrm{ss}}
\newcommand{\Trace}{\mathrm{Trace}}
\newcommand{\Hyptwo}{\mathrm{HYP}_1}
\newcommand{\WLCF}{\mathrm{(WLC)}_F}
\newcommand{\cG}{\mathcal{G}}
\begin{document}
\title[Iwasawa $\mu$-invariant for supersingular curves]{On the growth of $\mu$-invariant in Iwasawa theory of supersingular Elliptic curves}

\author{Jishnu Ray}
\address{Department of Mathematics, The University of British Columbia}
\curraddr{Room 121, 1984 Mathematics Road\\
	Vancouver, BC\\
	Canada V6T 1Z2}
\email{jishnuray1992@gmail.com}

\thanks{This work is supported by PIMS-CNRS postdoctoral research grant from the University of British Columbia.}

\begin{abstract}
In this article, we provide a relation between the $\mu$-invariants of the dual plus and minus Selmer groups for supersingular elliptic curves when we ascend from the cyclotomic $\Z_p$-extension  to a $\Z_p^2$-extension over an imaginary quadratic field. 
Furthermore we show that the (supersingular) $\mathfrak{M}_H(G)$-conjecture is  \textit{equivalent} to the fact that the $\mu$-invariant doesn't change as we go up the tower.

\end{abstract}

\maketitle


\textit{Keywords:} Iwasawa theory, $\mu$-invariant, Selmer groups, supersingular Elliptic curves.\\

\textit{AMS subject classifications:} 11R23, 11G05
\section{Introduction}
Iwasawa theory of elliptic curves which have good ordinary reduction at a prime $p$ over the trivializing extension was studied in \cite{Coates_Fragments}, \cite{CoatesSchneiderSujatha_Links_between}. The main ingredient in these papers is the fact that when the elliptic curve over $\Q$ has good ordinary reduction at $p$, then the dual Selmer group over the cyclotomic extension is a torsion module over the corresponding Iwasawa algebra. Then, in \cite[Prop. 2.12]{CoatesSchneiderSujatha_Links_between}, the authors study a relation between $\mu$-invariants of the dual Selmer groups over the cyclotomic extension and the dual Selmer group over the (non-abelian) trivializing extension. Under this setting, the five authors in \cite{CFKSV} framed  the well know $\mathfrak{M}_H(G)$-conjecture in Iwasawa theory. In \cite[Prop. 2.12]{CoatesSchneiderSujatha_Links_between}, the authors also showed that under $\mathfrak{M}_H(G)$-conjecture, the $\mu$-invariant of the dual Selmer group over the cyclotomic extension is the same as the $\mu$-invariant of the dual Selmer group over the trivializing extension.

 In this paper one of our main result is the relation between $\mu$-invariants of dual plus and minus Selmer groups as we ascend from a  $\Z_p$-extension  to  an abelian $\Z_p^2$-tower, when $E$ has supersingular reduction at $p$  over an imaginary quadratic field. The reason of working over imaginary quadratic fields is to ensure that the dual plus and minus Selmer groups are already known and proved to be torsion modules and weak Leopoldt's conjecture is true. Further, in this case of imaginary quadratic field, another main result in this article is to show that (supersingular) $\mathfrak{M}_H(G)$-conjecture is \textit{equivalent} to the fact that the $\mu$-invariants of the dual plus and minus  Selmer groups over the cyclotomic $\Z_p$-extension are the same as the   $\mu$-invariants of the dual plus and minus  Selmer groups over the  $\Z_p^2$-extension. Note that in the ordinary case setting in \cite[Prop. 2.12]{CoatesSchneiderSujatha_Links_between}, the $p$-adic Lie group is non commutative and has higher cohomological dimension and so the authors could show only one direction (the fact that $\mathfrak{M}_H(G)$-conjecture implies the $\mu$-invariant doesn't change). However, in our abelian setting over imaginary quadratic fields and supersingular elliptic curves, we could show the converse direction. As a corollary of our main result linking the $\mu$-invariants, even \textit{without} assuming $\mathfrak{M}_H(G)$-conjecture, we can show that if the $\mu$-invariants of the dual plus and minus  Selmer groups over the cyclotomic $\Z_p$-extension are zero, then  the  $\mu$-invariants of the dual plus and minus  Selmer groups over the  $\Z_p^2$-extension are also zero. 
 Please see Theorem \ref{Thm:all} where we summarize all our results.
  Under Iwasawa main conjecture, our results can also be translated in terms of the (analytic) $\mu$-invariants of the cyclotomic plus and minus $p$-adic $L$-functions and the two variable plus and minus $p$-adic $L$-functions of Loeffler and Zerbes which correspond to the plus and minus Selmer groups over corresponding Iwasawa tower.
  
  We are aware that it is possible to possible to generalize our results in this paper to any number field over a $\Z_p^d$-tower ($d>2$)  but then the formula for the $\mu$-invariants would not be so simple and one has to \textit{assume} that the dual signed Selmer groups are torsion modules over the corresponding Iwasawa algebra  and weak Leopoldt's conjecture over the $\Z_p^d$-extension is true.  Further, in this more general case, the direction that $\mathfrak{M}_H(G)$-conjecture is true implies that the $\mu$-invariants doesn't change, is not clear to me. Also the existence of Euler systems are unknown for general extensions, whereas in the imaginary quadratic field case they are known \cite{LLZ1}. Therefore, in this paper we only treat the case $d=2$ where the results are as nicest looking as possible.

\subsubsection{Preliminaries and our main results}
Let $F^{\prime}$ be a subfield of $F$, $E/F^{\prime}$ be an elliptic curve with  good supersingular reduction at all primes above $p$. Let $S$ be the set of primes of $F^\prime$ above $p$ and the primes where $E$ has bad reduction. 
We write $\Sss_p$ to be the set of primes of $F^\prime$ lying above $p$ where $E$ has supersingular reduction. Then, $S=\Sss_p \cup \text{ \{bad primes\}}$. Let $F^{\prime}_v$  be the completion of $F^{\prime}$ at a prime $v \in \Sss_p$ with residue field $k^{\prime}_v$. Let $\tilde{E}(k^{\prime}_v) $ be the $k^{\prime}_v$-points of the reduction of $E$ at place $v$. 

Assume the following.
\begin{enumerate}[(i)]
	\item $\Sss \neq 0$,
	\item For all $ v \in \Sss$, the completion of $F^{\prime}_v$ at $v$, denoted $F^{\prime}_v$ is $\Q_p$,
	\item $a_v = 1+p -\#\tilde{E}(k^{\prime}_v) =0,$
	\item $v$ is unramified in $F$.
\end{enumerate}
Recall that $F_{\cyc}$ is the cyclotomic $\Z_p$-extension of $F$. For each integer $n\geq 0$, let $F_n$ be the sub-extension of $F_{\cyc}$ such that $F_n$ is a cyclic extension of degree $p^n$ over $F$. Let $\Sss_{p,F}$ denote the set of primes of $F$ above $\Sss$. By abuse of notation, let $F_{n,v}$ be the completion of $F_n$ at the unique prime over $v \in \Sss_{p,F}$. For every $v \in \Sss_{p,F}$, following Kobayashi \cite{Kobayashi}, we define
\begin{align}
E^+(F_{n,v}) &= \{ P \in \hat{E}(F_{n,v}) \mid  \Trace_{n/m+1}P \in  \hat{E}(F_{m,v}) \text{ for all even } m, 0 \leq m \leq n-1 \},\label{eq:traceEven}\\
E^-(F_{n,v}) &= \{ P \in \hat{E}(F_{n,v}) \mid  \Trace_{n/m+1}P \in  \hat{E}(F_{m,v}) \text{ for all odd } m, 0 \leq m \leq n-1 \}\label{eq:traceOdd},
\end{align}
Here $ \Trace_{n/m+1}$ is the trace map from   $\hat{E}(F_{n,v})$ to $\hat{E}(F_{m+1,v})$. 
We define the  ($p$-adic) Selmer group over $F_n$ by the following sequence.
$$0 \rightarrow \Sel_p(E/F_n) \rightarrow  \ H^1(F_S/F_n, E_{p^\infty}) \rightarrow  \oplus_{w \mid S}H^1(F_{n,w},E)(p).  $$
Here $F_S$ is the maximal extension of $F$ unramified outside primes of $F$ over $S$.  The plus and minus Selmer group over $F_n$ are defined by 
$$\Sel_p^{\pm}(E/F_n) = \ker \Big( \Sel_p(E/F_n) \rightarrow \oplus_{v \in \Sss_{p,F}} \frac{H^1(F_{n,v},E_{p^\infty})}{E^\pm(F_{n,v}) \otimes \Q_p/\Z_p}\Big). $$
We regard $E^\pm(F_{n,v}) \otimes \Q_p/\Z_p$ as a subgroup of $H^1(F_{n,v},E_{p^\infty})$ via the Kummer map. 
The plus and minus Selmer groups over $F_{\cyc}$ are defined by $$\Sel_p^\pm(E/F_\cyc) = \varinjlim_n \Sel_p^{\pm}(E/F_n).$$

\subsubsection{Signed Selmer groups over a $\Z_p^2$-extension}\label{inftySuper}
Suppose $F$ is an imaginary quadratic field where $p$ splits completely.
Let $F_\infty$ denote the compositum  of all $\Z_p$-extensions of $F$. 


By Leopoldt's conjecture we know that $G= \Gal(F_\infty/F)\cong \Z_p^2$, which implies that $F_\infty$ over $F_\cyc$ is a $\Z_p$-extension. Let $v$ be a place of $F$ and $w$ be a place of $F_\infty$ above $v$. If $v \mid p$, then $F_v \cong \Q_p$ and $F_{\infty, w}$ is an abelian pro-$p$ extension over $F_v$. By local class field theory, $\Gal(F_{\infty,w}/F_v) \cong \Z_p^2.$ 
Under this setting,  it is possible to define the plus and minus norm groups $E^{\pm}(F_{\infty,w}) \subset \hat{E}(F_{\infty,w})$ via Trace maps as in \eqref{eq:traceEven} and \eqref{eq:traceOdd}  (cf. see Section 5.2 of \cite{LeiSujatha} which is a generalization of a construction by Kim \cite{Kim}). Identifying $E^{\pm}(F_{\infty,w})  \otimes \Q_p/\Z_p$ with a subgroup of $H^1(F_{\infty,w},E_{p^\infty})$ via the Kummer map, we may define the local terms 
$$J_v^\pm(E/F_\infty) = \oplus_{w \mid v} \frac{H^1(F_{\infty,w},E_{p^\infty})}{E^{\pm}(F_{\infty,w})  \otimes \Q_p/\Z_p}.$$
Then, the plus and minus Selmer groups over $\Z_p^2$-extension $F_\infty$ are defined by 
\begin{equation}
\Sel_p^{\pm}(E/F_\infty) = \ker \Big( \Sel_p(E/F_\infty)  \rightarrow \oplus _{v \in \Sss_{p,F}}J_v^\pm(E/F_\infty) \Big).
\end{equation}
Here the (classical) Selmer group $\Sel_p(E/F_\infty)$ is defined by taking inductive limit of Selmer groups over all finite extensions contained in the $p$-adic Lie extension $F_\infty$.

\subsubsection{\textbf{Main result}}
Let $X_\infty^\pm$ denote the Pontryagin dual of $\Sel_p^\pm(E/F_\infty)$ and $X_\cyc^\pm$ denote the Pontryagin  dual of $\Sel_p^\pm(E/F_\cyc)$.  Let $G =\Gal(F_\infty/F)$,  $H=\Gal(F_\infty/F_\cyc)$ and $\Gamma=\Gal(F_\cyc/F)$. The groups $X_\infty^\pm$ and $X_\cyc^\pm$ are finitely generated torsion modules over the Iwasawa algebra of $G$ and $\Gamma$ respectively. Let $X_\infty^\pm(p)$ be the submodule of all the elements of $X_\infty^\pm$ which are annihilated by some power of $p$ and $X_{\infty,f}^\pm=X_\infty^\pm/X_{\infty}^\pm(p)$.

Then we show that
\begin{theorem}[See Theorems \ref{mainTheoremSupersingular} and \ref{thm:mufixed}]\label{Thm:all}
We have the relation 	$$\mu_G(X_\infty^\pm) = \mu_\Gamma(X_\cyc^\pm) - \mu_\Gamma(H_0(H, X_{\infty,f}^\pm)).$$
	
	So if $ \mu_\Gamma(X_\cyc^\pm) =0$ then $\mu_G(X_\infty^\pm)=0$. 
	Furthermore, $\mu_G(X_\infty^\pm) = \mu_\Gamma(X_\cyc^\pm)$ if and only if  $\mathfrak{M}_H(G)$-conjecture is true (that is $X_{\infty,f}^\pm$ is finitely generated as a $\Lambda(H)$-module).
\end{theorem}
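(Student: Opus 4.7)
My strategy has three parts: a Mazur-style control theorem converts the identity into a statement about $H$-coinvariants, the structure theorem for $\Lambda(G)$-modules reduces it to a termwise check on elementary cyclic modules, and Nakayama's lemma yields the $\mathfrak{M}_H(G)$-equivalence.

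First, I would establish a control theorem: the restriction map
\[
\Sel_p^\pm(E/F_\cyc) \longrightarrow \Sel_p^\pm(E/F_\infty)^{H}
\]
has kernel and cokernel whose Pontryagin duals are pseudo-null over $\Lambda(\Gamma)$, so that dualizing gives $\mu_\Gamma(X_\cyc^\pm) = \mu_\Gamma(H_0(H, X_\infty^\pm))$. This would proceed via the Hochschild--Serre spectral sequence together with the hypotheses $(i)$--$(iv)$, and a careful analysis of the local error terms at the supersingular primes using Kim's construction of $E^\pm(F_{\infty,w})$ and their trace compatibility. Verifying that these local error terms have $\mu_\Gamma$-invariant zero is the main technical burden of the whole proof.

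Next, to prove the algebraic identity
\[
\mu_G(X_\infty^\pm) \;=\; \mu_\Gamma\bigl(H_0(H, X_\infty^\pm)\bigr) - \mu_\Gamma\bigl(H_0(H, X_{\infty,f}^\pm)\bigr),
\]
I would invoke the structure theorem for finitely generated torsion modules over the regular local ring $\Lambda(G) \cong \Z_p[[T_1,T_2]]$: up to pseudo-isomorphism,
\[
X_\infty^\pm \;\sim\; \bigoplus_i \Lambda(G)/p^{a_i} \;\oplus\; \bigoplus_j \Lambda(G)/g_j
\]
with each $g_j$ coprime to $p$, so $\mu_G(X_\infty^\pm) = \sum_i a_i$, while $X_{\infty,f}^\pm$ matches the second summand. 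Applying $H_0(H,-)$ termwise: each factor $\Lambda(G)/p^{a_i}$ gives $\Lambda(\Gamma)/p^{a_i}$, contributing $\sum_i a_i$ to $\mu_\Gamma(H_0(H, X_\infty^\pm))$ and zero to $\mu_\Gamma(H_0(H, X_{\infty,f}^\pm))$, whereas each $\Lambda(G)/g_j$ contributes $\mu_\Gamma(\Lambda(\Gamma)/\overline{g_j})$ equally to both sides and cancels in the difference. Combining with the control theorem then gives the formula stated in the theorem.

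Finally, for the $\mathfrak{M}_H(G)$-equivalence, note that the control theorem forces $H_0(H, X_\infty^\pm)$ to be $\Lambda(\Gamma)$-torsion, and hence so is its quotient $H_0(H, X_{\infty,f}^\pm)$. For any $\Lambda(\Gamma)$-torsion module, $\mu_\Gamma$ vanishes if and only if its reduction mod $p$ is finite; but
\[
H_0(H, X_{\infty,f}^\pm)/p \;=\; X_{\infty,f}^\pm\big/(p,\, h-1)\,X_{\infty,f}^\pm,
\]
and by Nakayama's lemma applied to the compact local ring $\Lambda(H) = \Z_p[[h-1]]$ with maximal ideal $(p,\,h-1)$, finiteness of the right-hand side is equivalent to $X_{\infty,f}^\pm$ being finitely generated over $\Lambda(H)$, i.e.\ to the $\mathfrak{M}_H(G)$-conjecture. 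Substituting into the main formula yields $\mu_G(X_\infty^\pm) = \mu_\Gamma(X_\cyc^\pm)$ if and only if $\mathfrak{M}_H(G)$ holds, as required.
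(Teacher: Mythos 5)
Your part (1), the control theorem, is essentially what the paper uses, except that the paper imports a stronger statement: by Lei--Sujatha (Lemma \ref{lem1}), the map $\alpha_\infty\colon \Sel_p^\pm(E/F_\cyc) \to \Sel_p^\pm(E/F_\infty)^H$ is an \emph{isomorphism}, so there is no local-error bookkeeping to do at all, and $(X_\infty^\pm)_H \cong X_\cyc^\pm$ on the nose. Your part (3), the Nakayama argument for the $\mathfrak{M}_H(G)$ equivalence, is correct and is exactly the paper's argument in Theorem \ref{thm:mufixed}.

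The genuine gap is in part (2). The identity
\[
\mu_G(X_\infty^\pm) \;=\; \mu_\Gamma\bigl(H_0(H, X_\infty^\pm)\bigr) - \mu_\Gamma\bigl(H_0(H, X_{\infty,f}^\pm)\bigr)
\]
is \emph{not} a general algebraic fact for finitely generated torsion $\Lambda(G)$-modules, and your reduction to elementary cyclic modules via the structure theorem does not go through. The problem is that a pseudo-isomorphism over $\Lambda(G)\cong\Z_p[[T_1,T_2]]$ is only controlled up to codimension-two error, and the functor $H_0(H,-)=(-)/T_2(-)$ does not send pseudo-null $\Lambda(G)$-modules to modules of trivial $\mu_\Gamma$. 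Concretely, $M=\Lambda(G)/(p,T_2)\cong\mathbb{F}_p[[T_1]]$ is pseudo-null (its support has codimension two), $M(p)=M$, $M_f=0$, and $\mu_G(M)=0$; but $H_0(H,M)=M\cong\mathbb{F}_p[[T_1]]$ has $\mu_\Gamma=1$, while $H_0(H,M_f)=0$. So the right-hand side of your identity equals $1$ while the left-hand side equals $0$. Equivalently, the pseudo-isomorphism error gets amplified by $H_0(H,-)$ and cannot be ignored ``termwise.'' (Note also that in this example $H_1(H,M)\cong M\neq 0$, which is precisely the kind of term the paper must and does kill.)

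What the paper actually does, and what your proof is missing, is a chain of \emph{non-formal} cohomological vanishing results specific to these Selmer groups: surjectivity of the localization maps $\lambda_\infty^{H,\pm}$ and $\lambda_\infty^\pm$ (Lemmas \ref{lemma2}, \ref{jishnu} and Proposition \ref{Prop:mithu}, relying on weak Leopoldt, Theorem \ref{leopoldt}), which give $H^1(H,\Sel_p^\pm(E/F_\infty))=0$, i.e.\ $H_1(H,X_\infty^\pm)=0$ (Lemma \ref{Cor5.12}), and hence $H_1(H,X_{\infty,f}^\pm)=0$ and $H_1(H,X_\infty^\pm(p))=0$ (Lemma \ref{lemma6} and the end of Theorem \ref{mainTheoremSupersingular}). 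Only once all the $H_1$-terms are shown to vanish does the Hochschild--Serre Euler-characteristic computation $\chi(G,X_\infty^\pm(p))=\prod_{i=0}^1 \chi(\Gamma,H_i(H,X_\infty^\pm(p)))^{(-1)^i}$, combined with the four-term exact sequence \eqref{2.4} and additivity of $\mu_\Gamma$, yield the stated formula. Your argument replaces this arithmetic input with a purely module-theoretic reduction, which, as the example shows, is false in general; you would need to recover the vanishing of $H_1(H,X_\infty^\pm)$ (hence of $H_1(H,X_\infty^\pm(p))$ and $H_1(H,X_{\infty,f}^\pm)$) from the Selmer-group-specific surjectivity results before the $\mu$-bookkeeping can close.
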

Here note that in the case of Coates-Schneider-Sujatha  \cite[Prop. 2.12]{CoatesSchneiderSujatha_Links_between}, they are working over a noncommutative $p$-adic Lie group of cohomological dimension four and so their $\mu$-invariant formula is not so simple as ours and they could only prove that if $\mathfrak{M}_H(G)$-conjecture is true then the $\mu$-invariant doesn't change. However, in our case $G$ is a commutative $p$-adic Lie group of cohomological dimension $2$ and this allows us to show also the converse. That is, if $\mathfrak{M}_H(G)$-conjecture is true, then the $\mu$-invariants remain fixed.

 In Section \ref{sec:hoschild1}, we recall basic facts on Hochschild-Serre spectral sequence which we will use throughout the article. The proofs of the main results are included in Section \ref{main}.

\section{Recall on Hochschild-Serre spectral sequence}\label{sec:hoschild1}
Let $G$ be any profinite group. Let $M$ be an abelian group with  discrete topology and a continuous action of $G$, $H$ be a closed normal subgroup of $G$, then there exists the following Hochschild-Serre spectral sequence.
\begin{equation}\label{eq:hochserre}
H^r(G/H,H^s(H,M)) \implies H^{r+s}(G,M).
\end{equation}
This gives rise to the following inflation-restriction exact sequence
$$0 \rightarrow H^1(G/H,M^H) \rightarrow H^1(G,M) \rightarrow H^1(H,M)^{G/H} \rightarrow H^2(G/H,M^H) \rightarrow H^2(G,M).$$
Furthermore, if $H^i(H,M)=0$ for $2 \leq i \leq m$, then we get the following exact sequence for higher cohomology groups.
\begin{align*}
H^m(G/H,M^H) &\rightarrow H^m(G,M) \rightarrow H^{m-1}(G/H,H^1(H,M)) \\
& \rightarrow H^{m+1}(G/H,M^H) \rightarrow H^{m+1}(G,M).
\end{align*}
We are going to use the above two exact sequences  throughout this article.

\section{Relation of $\mu$-invariants between cyclotomic and $\Z_p^2$-extension}\label{main}
Recall that $H = \Gal(F_\infty/F_\cyc)$, a $p$-adic Lie group of dimension $1$, $\Gamma = \Gal(F_\cyc/F)$ also of dimension $1$ and  $\Lambda(\Gamma)$ is the Iwasawa algebra of $\Gamma$. This is a completed group algebra defined by 
 $$\Lambda(\Gamma)= \varprojlim_W\Z_p[\Gamma/W],$$ where $W$ runs over all open normal subgroups of $\Gamma$. The Iwasawa algebra  $\Lambda(\Gamma)$ is a local Noetherian integral domain. The completed group algebra $\Lambda(\Gamma)$ can also be thought of as a distribution algebra of $\Z_p$-valued measures on the $p$-adic Lie group $\Gamma$.
Let $G_S(F_\infty) = \Gal(F_S/F_\infty)$ and $G_S(F_\cyc) = \Gal(F_S/F_\cyc)$. Consider the following fundamental diagram.
\begin{equation}\label{fundamental_diagram_Supersingular}
\begin{tikzcd}
0 \arrow[r] & \Sel_p^\pm(E/F_\infty)^H  \arrow[r] & H^1(G_S(F_\infty), E_{p^\infty})^H \arrow[r, "\lambda_\infty^{H,\pm} "] & \big(\oplus_{v \in S}J_v^{\pm}(E/F_\infty)\big)^H   \\
0 \arrow[r] & \Sel_p^\pm(E/F_\cyc) \arrow[u, "\alpha_\infty"] \arrow[r] &  H^1(G_S(F_\cyc), E_{p^\infty}) \arrow[u, "\beta_\infty"]  \arrow[r, "\lambda_{\cyc}^\pm"] & \oplus_{v \in S}J_v^\pm(E/F_\cyc) \arrow[u, "\gamma_{\infty}^\pm = \oplus_{v \in S} \gamma_{v,\infty}^\pm"] 
\end{tikzcd}
\end{equation}
The following Lemma is due to Lei and Sujatha.
\begin{lemma}\label{lem1}
	The maps $\alpha_\infty$, $\beta_\infty$ and $\gamma_\infty^\pm$ are isomorphisms.
\end{lemma}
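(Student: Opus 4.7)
The plan is to establish the three isomorphism claims in the order $\beta_\infty$, $\gamma_\infty^\pm$, $\alpha_\infty$, with the last following formally from the first two by applying the snake lemma to the fundamental diagram \eqref{fundamental_diagram_Supersingular}.

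For $\beta_\infty$, I would apply the Hochschild--Serre inflation--restriction sequence of Section \ref{sec:hoschild1} to the closed normal subgroup $G_S(F_\infty) \trianglelefteq G_S(F_\cyc)$ with quotient $H = \Gal(F_\infty/F_\cyc) \cong \Z_p$. The kernel of $\beta_\infty$ is $H^1(H, E(F_\infty)_{p^\infty})$, and the cokernel injects into $H^2(H, E(F_\infty)_{p^\infty})$, which vanishes automatically because $H$ has $p$-cohomological dimension one. Under the running hypotheses---$F$ imaginary quadratic, $p$ split in $F$, $a_v = 0$, $F_v = \Q_p$---the group $E(F_\infty)_{p^\infty}$ is forced to be trivial (or at least to have vanishing $H^1$), which handles the kernel.

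For $\gamma_{v,\infty}^\pm$ the idea is to work one place at a time. Fix $v \in S$, a prime $w$ of $F_\cyc$ above $v$, and a prime $w'$ of $F_\infty$ above $w$; set $H_w := \Gal(F_{\infty,w'}/F_{\cyc,w}) \subset H$. The same cohomological-dimension input gives an isomorphism on the $H^1$ pieces: $H^1(F_{\cyc,w}, E_{p^\infty}) \xrightarrow{\sim} H^1(F_{\infty,w'}, E_{p^\infty})^{H_w}$. The essential---and harder---content is the compatibility on the signed level: one must show that the image of $E^\pm(F_{\cyc,w}) \otimes \Q_p/\Z_p$ matches $(E^\pm(F_{\infty,w'}) \otimes \Q_p/\Z_p)^{H_w}$ inside the target. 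This is precisely the local computation carried out by Lei and Sujatha in \cite{LeiSujatha}, using the trace-compatible definitions \eqref{eq:traceEven}--\eqref{eq:traceOdd} together with the assumptions $a_v = 0$ and $F_v = \Q_p$.

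Once $\beta_\infty$ and each $\gamma_{v,\infty}^\pm$ are isomorphisms, the snake lemma applied to \eqref{fundamental_diagram_Supersingular} forces $\alpha_\infty$ to be an isomorphism as well. The main obstacle in this plan is the local compatibility step for $\gamma_\infty^\pm$: the Galois-cohomology vanishing is routine from $\mathrm{cd}_p(H) = 1$, but matching up the plus/minus norm subgroups at the $F_\cyc$ level with the $H_w$-invariants at the $F_\infty$ level relies crucially on the specific trace structure of the Kobayashi--Kim construction in the imaginary quadratic, split, supersingular setting; this is where the careful bookkeeping in \cite{LeiSujatha} does the real work.
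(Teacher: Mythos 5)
Your proposal is correct and matches the paper's argument: the paper also establishes $\beta_\infty$ and $\gamma_\infty^\pm$ as isomorphisms by citing Lei--Sujatha (Proposition 5.8 and Lemma 5.10 of \cite{LeiSujatha}, after noting that the extra case of Lemma 5.9 does not arise since all places above $p$ are supersingular), and then deduces that $\alpha_\infty$ is an isomorphism from the snake lemma applied to the fundamental diagram \eqref{fundamental_diagram_Supersingular}. Your sketch of the Hochschild--Serre input for $\beta_\infty$ (kernel controlled by $H^1(H,E(F_\infty)_{p^\infty})$, surjectivity from $\mathrm{cd}_p(H)=1$) and your identification of the local signed-subgroup compatibility as the genuinely hard content of $\gamma_\infty^\pm$ are exactly the two ingredients that the cited results of \cite{LeiSujatha} supply.
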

\begin{proof} 
	See Corollary 5.7, Proposition 5.8 and Lemma 5.10 of \cite{LeiSujatha} and use snake lemma for the fundamental diagram \eqref{fundamental_diagram_Supersingular}. Note that we have assumed that $E$ has supersingular reduction for all primes above $p$ and so the case of Lemma 5.9 of \cite{LeiSujatha} does not appear.
\end{proof}

From now on, we will assume that $E$ is \textit{defined over $\Q$} with good supersingular reduction at $p$ and $a_p=0$. 
Then, we have the following Theorems.
\begin{theorem}\label{torsion}
	The Pontryagin dual of $\Sel_p^\pm(E/F_\cyc)$ is $\Lambda(\Gamma)$-torsion. The Pontryagin dual of $\Sel_p^\pm(E/F_\infty)$ is $\Lambda(G)$-torsion.
\end{theorem}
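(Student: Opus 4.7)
Both statements are established in the existing literature, and the proof I would give is essentially an invocation of them combined with a short descent argument. I organize the plan as two independent steps, one for each torsionness statement.

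First, for $X_\cyc^\pm$: Kobayashi's theorem \cite[Thm.~2.2]{Kobayashi} shows that $\Sel_p^\pm(E/\Q_\cyc)^\vee$ is a finitely generated torsion module over $\Lambda(\Gal(\Q_\cyc/\Q))$ under our standing hypotheses ($E/\Q$ supersingular at $p$ with $a_p=0$, and $p$ odd). I would transfer this to $F_\cyc$ by Galois descent. Since $F$ is imaginary quadratic and $p$ splits in $F$ (in particular $p$ is odd and unramified in $F/\Q$), one has $F \cap \Q_\cyc = \Q$, so $\Gal(F_\cyc/\Q_\cyc) \cong \Gal(F/\Q) \cong \Z/2\Z$, of order coprime to $p$. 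The kernel and cokernel of the restriction map on the signed Selmer groups are controlled by $H^i(\Gal(F/\Q), E_{p^\infty})$, which is killed by $2$ and hence vanishes on the $p$-primary part. Decomposing along the characters of $\Gal(F/\Q)$ introduces the quadratic twist $E^{F/\Q}$, which still has $a_p = 0$ at $p$ because $p$ is unramified in $F/\Q$; Kobayashi's theorem applies to both $E$ and the twist, and the torsionness over $\Lambda(\Gamma)$ follows.

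Second, for $X_\infty^\pm$: this is the main cotorsion theorem of Kim \cite{Kim} (see also the detailed exposition in Section~5 of \cite{LeiSujatha}). The strategy of Kim is twofold. (i) Use a control theorem of the same shape as Lemma~\ref{lem1} above, comparing $\Sel_p^\pm(E/F_\infty)^H$ with $\Sel_p^\pm(E/F_\cyc)$ (Pontryagin dually, $X_\cyc^\pm \cong (X_\infty^\pm)_H$); combined with a topological Nakayama lemma over $\Lambda(G)$, this first gives that $X_\infty^\pm$ is finitely generated as a $\Lambda(G)$-module. (ii) Produce a non-zero element of the characteristic ideal of $X_\infty^\pm$ in $\Lambda(G)$ using the two-variable Beilinson--Flach Euler system of Lei--Loeffler--Zerbes \cite{LLZ1}, adapted to the plus/minus local conditions. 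Together (i) and (ii) yield $\Lambda(G)$-torsionness of $X_\infty^\pm$.

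The main obstacle is step~(ii) of the second part: producing a non-zero annihilator by an Euler system. This geometric input is available precisely because $F$ is imaginary quadratic with $p$ split, which is exactly the reason the paper restricts to this setting, as the author emphasizes in the introduction. Once both torsionness statements are in hand, the $\mu$-invariants appearing in Theorem~\ref{Thm:all} are well-defined and the subsequent Hochschild--Serre analysis of Section~\ref{main} may proceed.
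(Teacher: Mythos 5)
Your proposal is correct, but it routes through a different set of references than the paper does. The paper's proof is terse: it identifies $\Sel_p^\pm(E/F_\infty)$ with the $\Sel_p^{\pm\pm}(E/F_\infty)$ of Lei--Sprung and cites Lemma~\ref{lem1} together with the proof of Theorem~4.4 of \cite{LeiSprung}, which packages both the $\Lambda(\Gamma)$- and $\Lambda(G)$-torsion statements at once. You instead treat the two statements separately: Kobayashi's theorem over $\Q_\cyc$ plus a prime-to-$p$ Galois descent (decomposing along the quadratic character of $F/\Q$ and using the twist $E^{F/\Q}$, which still has $a_p=0$ since $p$ is unramified in $F$) for $X_\cyc^\pm$; and then, for $X_\infty^\pm$, a control-theorem-plus-topological-Nakayama argument establishing finite generation over $\Lambda(G)$, followed by an Euler-system input from \cite{LLZ1} to produce a non-zero element of the characteristic ideal. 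Both chains of reasoning are valid and rest on the same ultimate ingredients --- control theorems, Nakayama, and Euler systems to bound characteristic ideals. What your route buys is explicitness: the Kobayashi-plus-descent reduction for the cyclotomic case is a clean, standard move that the paper leaves implicit in its citation, and separating ``finitely generated'' from ``non-zero annihilator'' makes the logical architecture of the two-variable case transparent. The paper's preference for Lei--Sprung is presumably because their $\Sel_p^{++}$ and $\Sel_p^{--}$ coincide with the paper's $\Sel_p^\pm$ on the nose, avoiding any need to reconcile conventions across sources.
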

\begin{proof}
	Note that our $\Sel_p^+(E/F_\infty)$ is  $\Sel_p^{++}(E/F_\infty)$ and our $\Sel_p^-(E/F_\infty)$ is   $\Sel_p^{--}(E/F_\infty)$ in the paper \cite{LeiSprung} by Lei and Sprung.
Then this Theorem essentially follows by Lemma \ref{lem1} and from the proof of Theorem 4.4 of \cite{LeiSprung}. 
\end{proof}
\begin{theorem}\label{leopoldt}
	Weak Leopoldt's conjectures for $F_\infty$ and $F_\cyc$ are both true. That is $H^2(F_S/F_\infty, E_{p^\infty}) =0$ and $H^2(F_S/F_\cyc, E_{p^\infty}) =0$.
\end{theorem}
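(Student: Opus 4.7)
The plan is to handle the cyclotomic case first and then to climb to the $\Z_p^2$-extension via the Hochschild--Serre spectral sequence applied to the intermediate extension $F_\infty/F_\cyc$. This way, once $F_\cyc$ is settled, the $F_\infty$-statement follows almost formally from the structure of the Galois group $H=\Gal(F_\infty/F_\cyc)\cong\Z_p$.

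For the cyclotomic statement $H^2(F_S/F_\cyc,E_{p^\infty})=0$, I would invoke the classical weak Leopoldt conjecture for elliptic curves with good reduction over the cyclotomic $\Z_p$-extension, which is a consequence of Kato's Euler system work (the fact that $X_\cyc^\pm$ is $\Lambda(\Gamma)$-torsion, already used in Theorem \ref{torsion}, is in essence the same circle of ideas). Since $E/\Q$ is modular with good supersingular reduction and $F/\Q$ is finite, one can base change Kato's statement from $\Q_\cyc$ to $F_\cyc=F\cdot \Q_\cyc$. Alternatively, one can combine the torsion statement of Theorem \ref{torsion} with a global Euler characteristic / Poitou--Tate duality computation, using that $F$ is totally imaginary and the local Euler factors at archimedean primes therefore vanish.

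For the $\Z_p^2$-tower, I would feed the cyclotomic vanishing into the Hochschild--Serre spectral sequence recalled in Section \ref{sec:hoschild1}:
$$H^r(H, H^s(G_S(F_\infty), E_{p^\infty})) \implies H^{r+s}(G_S(F_\cyc), E_{p^\infty}).$$
Since $H\cong\Z_p$ has cohomological dimension $1$, the terms $E_2^{r,s}$ with $r\geq 2$ all vanish, and all differentials landing in or originating from total degree $2$ are zero. Thus the spectral sequence degenerates in total degree two to a short exact sequence
$$0 \to H^1(H, H^1(G_S(F_\infty), E_{p^\infty})) \to H^2(G_S(F_\cyc), E_{p^\infty}) \to H^2(G_S(F_\infty), E_{p^\infty})^H \to 0.$$
The middle term is zero by the first step, hence $H^2(G_S(F_\infty), E_{p^\infty})^H=0$.

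To finish, I would apply a standard Nakayama-type lemma: if $M$ is a discrete $p$-primary abelian group carrying a continuous action of a pro-$p$ group $H\cong\Z_p$ and $M^H=0$, then $M=0$. Indeed, any nonzero $x\in M$ has an open stabilizer in $H$, so its orbit is finite of $p$-power order; it generates a finite $p$-subgroup of $M$ on which the pro-$p$ group $H$ acts, and the class equation forces nontrivial $H$-fixed points, contradicting $M^H=0$. Applied to $M=H^2(G_S(F_\infty),E_{p^\infty})$, this yields the desired vanishing. The main obstacle is really the cyclotomic case, which rests on deep input from Kato's Euler system; the remainder of the argument is a routine spectral sequence plus Nakayama step.
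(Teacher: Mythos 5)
Your proposal is correct, and it takes a genuinely different route from the paper for half of the statement. The paper's proof is a one-line citation: both vanishings are deduced from the torsion statements of Theorem~\ref{torsion} by invoking Propositions 4.6 and 4.12 of Lei--Lim, which establish the equivalence ``signed Selmer cotorsion $\Leftrightarrow$ weak Leopoldt'' over $F_\cyc$ and over $F_\infty$ respectively. Your treatment of the cyclotomic case --- torsion of $X_\cyc^\pm$ combined with Poitou--Tate and the global Euler characteristic formula over $\Lambda(\Gamma)$, using that $F$ is imaginary quadratic --- is essentially what Lei--Lim's Proposition 4.6 does, so that half of your argument is in the same spirit as the paper's, just without the citation (and you would still want to say why corank zero for $H^2(G_S(F_\cyc),E_{p^\infty})$ upgrades to actual vanishing, e.g.\ because this $H^2$ is $\Lambda$-cofree). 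Where you genuinely diverge is the $F_\infty$ step: you bypass both Lei--Lim's Proposition 4.12 and the $\Lambda(G)$-torsion of $X_\infty^\pm$ entirely, instead running the Hochschild--Serre spectral sequence for $H=\Gal(F_\infty/F_\cyc)\cong\Z_p$ to get the exact sequence
\[
0 \to H^1\bigl(H, H^1(G_S(F_\infty), E_{p^\infty})\bigr) \to H^2(G_S(F_\cyc), E_{p^\infty}) \to H^2(G_S(F_\infty), E_{p^\infty})^H \to 0,
\]
concluding $H^2(G_S(F_\infty), E_{p^\infty})^H=0$ from the cyclotomic vanishing, and then upgrading to $H^2(G_S(F_\infty), E_{p^\infty})=0$ by the Nakayama-type argument for discrete $p$-primary modules under a pro-$p$ group. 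This is correct (the cohomology is indeed a discrete $p$-primary smooth $H$-module), and it is a real simplification: it shows that weak Leopoldt over $F_\cyc$ \emph{unconditionally} implies weak Leopoldt over $F_\infty$, requiring less input than the paper's proof and meshing nicely with the spectral-sequence techniques the paper already uses in Lemma~\ref{jishnu}. The one thing to tighten is the cyclotomic base case, which in your write-up is more of a plan than a proof; in the end you would likely cite Lei--Lim (or Kato plus a descent argument from $\Q_\cyc$ to $F_\cyc$) just as the paper does.
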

\begin{proof}
	By Theorem \ref{torsion}, the result follows from Proposition 4.6 and Proposition 4.12 of \cite{LeiLim}.
\end{proof}

\begin{lemma}\label{lemma2}
We have the following short exact sequence.
\begin{equation}\label{Exact1Supersingular}
0 \rightarrow \Sel_p^\pm(E/F_\infty)^H \rightarrow  H^1(G_S(F_\infty), E_{p^\infty})^H \xrightarrow{\lambda_{\infty}^{H,\pm}} \oplus_{v \in S}J_v^{\pm}(E/F_\infty)^H \rightarrow 0.
\end{equation}
That is, the map $\lambda_{\infty}^{H,\pm}$ is surjective.
\end{lemma}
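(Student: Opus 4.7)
I plan to derive the lemma from Lemma \ref{lem1} by a diagram chase on the fundamental diagram \eqref{fundamental_diagram_Supersingular}. First observe that exactness of the top row at $\Sel_p^{\pm}(E/F_\infty)^H$ and at $H^1(G_S(F_\infty), E_{p^\infty})^H$ is automatic: it results from applying the left-exact functor of $H$-invariants to the defining left-exact sequence
\[
0 \to \Sel_p^{\pm}(E/F_\infty) \to H^1(G_S(F_\infty), E_{p^\infty}) \to \oplus_{v \in S} J_v^{\pm}(E/F_\infty),
\]
so the genuine content of the lemma is the surjectivity of $\lambda_\infty^{H,\pm}$.

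The key input I will use is the surjectivity of the corresponding cyclotomic map $\lambda_\cyc^\pm$. This is a consequence of Weak Leopoldt for $F_\cyc$ (Theorem \ref{leopoldt}, giving $H^2(G_S(F_\cyc), E_{p^\infty})=0$) via the Poitou--Tate global duality exact sequence applied to the signed Selmer structure, and the statement is already established in the supersingular setting over imaginary quadratic fields by the work of Lei--Sujatha cited in the proof of Lemma \ref{lem1}.

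Granting this input, surjectivity of $\lambda_\infty^{H,\pm}$ is a short diagram chase. Given $y \in \left(\oplus_{v \in S} J_v^{\pm}(E/F_\infty)\right)^H$, Lemma \ref{lem1} produces a unique $z \in \oplus_{v \in S} J_v^{\pm}(E/F_\cyc)$ with $\gamma_\infty^\pm(z) = y$. By surjectivity of $\lambda_\cyc^\pm$, lift $z$ to some $w \in H^1(G_S(F_\cyc), E_{p^\infty})$, and set $x := \beta_\infty(w) \in H^1(G_S(F_\infty), E_{p^\infty})^H$. Commutativity of \eqref{fundamental_diagram_Supersingular} then forces $\lambda_\infty^{H,\pm}(x) = \gamma_\infty^\pm(\lambda_\cyc^\pm(w)) = y$, as required.

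The main technical point is really this input step: verifying surjectivity of $\lambda_\cyc^\pm$. While it follows cleanly from Weak Leopoldt plus Poitou--Tate, one must check that the signed local conditions $E^{\pm}(F_{\cyc,v}) \otimes \Q_p/\Z_p$ at the supersingular primes are compatible with global duality; this compatibility is precisely what the Lei--Sujatha set-up cited for Lemma \ref{lem1} provides.
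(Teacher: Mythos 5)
Your diagram chase is correct and reaches the right conclusion: since Lemma~\ref{lem1} shows all three vertical maps in \eqref{fundamental_diagram_Supersingular} are isomorphisms, the exactness of the top row is formally equivalent to exactness of the bottom row, so the content of Lemma~\ref{lemma2} reduces to the surjectivity of $\lambda_\cyc^\pm$. The paper's own proof is simply a pointer to Proposition 5.11 of Lei--Sujatha, conditional on Theorem~\ref{torsion}, so your argument amounts to an explicit unpacking of that citation and is compatible with the paper's logical order (Theorem~\ref{leopoldt}, which you invoke, is proved from Theorem~\ref{torsion} and not from Lemma~\ref{lemma2}, so there is no circularity).

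One point deserves sharpening. You present the surjectivity of $\lambda_\cyc^\pm$ as ``a consequence of Weak Leopoldt via Poitou--Tate.'' As stated this is imprecise: in the signed Cassels--Poitou--Tate sequence over $F_\cyc$, the cokernel of $\lambda_\cyc^\pm$ sits against the (compact) dual signed Selmer group, and Weak Leopoldt ($H^2(F_S/F_\cyc, E_{p^\infty})=0$) by itself does not force that group to vanish. The standard argument derives \emph{both} Weak Leopoldt \emph{and} the surjectivity of $\lambda_\cyc^\pm$ from the torsion hypothesis on $X_\cyc^\pm$ (Theorem~\ref{torsion}), via a $\Lambda(\Gamma)$-rank count in the Poitou--Tate sequence; this is in fact exactly why the paper's proof cites Theorem~\ref{torsion} rather than Theorem~\ref{leopoldt}. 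Your proof would be cleaner --- and would match the paper's dependency structure more faithfully --- if you invoke Theorem~\ref{torsion} directly as the hypothesis yielding surjectivity of $\lambda_\cyc^\pm$ (via Kobayashi/Kim/Lei--Sujatha) rather than routing it through Weak Leopoldt as if the latter alone sufficed.
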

\begin{proof}
By Theorem \ref{torsion}, this is Proposition 5.11 of Lei and Sujatha \cite{LeiSujatha}.
\end{proof}
\begin{lemma}\label{jishnu}
	We have $$H^1(H,H^1(G_S(F_\infty),E_{p^\infty}))=0.$$
\end{lemma}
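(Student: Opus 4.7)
The plan is to apply the Hochschild--Serre spectral sequence to the exact sequence of Galois groups
$$1 \to G_S(F_\infty) \to G_S(F_\cyc) \to H \to 1$$
with coefficients in the discrete $p$-primary module $M = E_{p^\infty}$, and then use weak Leopoldt together with the fact that $H \cong \Z_p$ has $p$-cohomological dimension one.

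First I would record the two key inputs: (a) $H = \Gal(F_\infty/F_\cyc) \cong \Z_p$, so $H^i(H, -) = 0$ on any discrete $p$-primary $H$-module for all $i \geq 2$; (b) by Theorem~\ref{leopoldt}, $H^2(G_S(F_\infty), E_{p^\infty}) = 0$ and $H^2(G_S(F_\cyc), E_{p^\infty}) = 0$.

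Next, since $H^2(G_S(F_\infty), E_{p^\infty}) = 0$, I may invoke the higher-cohomology exact sequence from Section~\ref{sec:hoschild1} (applied with $G = G_S(F_\cyc)$, the normal subgroup being $G_S(F_\infty)$, quotient $H$, and $m = 2$) to obtain
\begin{align*}
H^2(H, E_{p^\infty}^{G_S(F_\infty)}) &\to H^2(G_S(F_\cyc), E_{p^\infty}) \to H^1(H, H^1(G_S(F_\infty), E_{p^\infty})) \\
&\to H^3(H, E_{p^\infty}^{G_S(F_\infty)}) \to H^3(G_S(F_\cyc), E_{p^\infty}).
\end{align*}
The two outer terms involving $H$-cohomology vanish by input (a). Hence the middle map is an isomorphism, yielding
$$H^1(H, H^1(G_S(F_\infty), E_{p^\infty})) \cong H^2(G_S(F_\cyc), E_{p^\infty}).$$

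Finally, input (b) (weak Leopoldt over $F_\cyc$) gives $H^2(G_S(F_\cyc), E_{p^\infty}) = 0$, and the desired vanishing follows. There is no real obstacle here once both halves of Theorem~\ref{leopoldt} are in hand; the only subtle point is confirming that $H$ indeed has cohomological $p$-dimension one (so that $H^2(H, -)$ and $H^3(H, -)$ both vanish on discrete $p$-primary modules), which is immediate from $H \cong \Z_p$.
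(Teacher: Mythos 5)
Your proof is correct and follows essentially the same route as the paper: apply Hochschild--Serre to $1 \to G_S(F_\infty) \to G_S(F_\cyc) \to H \to 1$, use weak Leopoldt for $F_\infty$ (to validate the five-term sequence at $m=2$) and for $F_\cyc$ (to kill $H^2(G_S(F_\cyc),E_{p^\infty})$), and use $\mathrm{cd}_p(H)=1$. If anything, you are slightly more careful than the paper about flagging that weak Leopoldt for $F_\infty$ is what gives $H^2(G_S(F_\infty),E_{p^\infty})=0$, rather than the bare bound $\mathrm{cd}_p(G_S(F_\infty))\leq 2$.
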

\begin{proof}
	Note that, as $p$ is odd, $G_S(F)$ has $p$-cohomological dimension $2$. Therefore, as $G_S(F_\infty)$ is a closed subgroup of $G_S(F)$, the $p$-cohomological dimension of $G_S(F_\infty)$ is less than or equal to $2$. This implies that $H^m(G_S(F_\infty),E_{p^\infty}) =0$ for all $m \geq 2$. 
	By Hochschild-Serre spectral sequence we then have $$H^{2}(G_S(F_\cyc),E_{p^\infty}) \rightarrow H^1(H,H^1(G_S(F_\infty),E_{p^\infty})) \rightarrow H^3(H, E_{p^\infty}^{G_S(F_\infty)}).$$
	
	Now by Theorem \ref{leopoldt},  the first term of the above sequence is zero; the third term is also zero as $H$ has $p$-cohomological dimension $1$. Hence, the middle term has to be zero. 
\end{proof}
\begin{proposition}\label{Prop:mithu}
	We have the following exact sequence.
	\begin{equation}\label{exact2}
	0 \rightarrow \Sel_p^\pm(E/F_\infty) \rightarrow  H^1(G_S(F_\infty), E_{p^\infty})  \xrightarrow{\lambda_{\infty}^\pm} \oplus_{v \in S}J_v^{\pm}(E/F_\infty) \rightarrow 0.
	\end{equation}
	That is, the map $\lambda_{\infty}^\pm$  is surjective.
\end{proposition}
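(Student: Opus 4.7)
The plan is to let $C := \im(\lambda_\infty^\pm)$ and $Q := \Coker(\lambda_\infty^\pm)$, and to prove $Q = 0$ by first computing $Q^H = 0$ through a diagram chase in $H$-cohomology, and then upgrading this to $Q = 0$ via topological Nakayama.

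I split $\lambda_\infty^\pm$ into the two short exact sequences
\begin{equation*}
0 \to \Sel_p^\pm(E/F_\infty) \to H^1(G_S(F_\infty), E_{p^\infty}) \to C \to 0
\end{equation*}
and
\begin{equation*}
0 \to C \to \oplus_{v \in S} J_v^\pm(E/F_\infty) \to Q \to 0.
\end{equation*}
Taking $H$-cohomology of the first and invoking Lemma \ref{jishnu} gives the four-term exact sequence
\begin{equation*}
0 \to \Sel_p^\pm(E/F_\infty)^H \to H^1(G_S(F_\infty), E_{p^\infty})^H \to C^H \to H^1(H, \Sel_p^\pm(E/F_\infty)) \to 0.
\end{equation*}
Now Lemma \ref{lemma2} says the composition $H^1(G_S(F_\infty), E_{p^\infty})^H \to C^H \hookrightarrow \big(\oplus_{v \in S} J_v^\pm(E/F_\infty)\big)^H$ is surjective; the injectivity of the second arrow forces both maps to be surjective, so $C^H = \big(\oplus_{v \in S} J_v^\pm(E/F_\infty)\big)^H$ and $H^1(H, \Sel_p^\pm(E/F_\infty)) = 0$. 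Continuing the long exact sequence one step further, the term $H^1(H, C)$ sits between $H^1(H, H^1(G_S(F_\infty), E_{p^\infty})) = 0$ (Lemma \ref{jishnu}) and $H^2(H, \Sel_p^\pm(E/F_\infty)) = 0$ (since $H \cong \Z_p$ has $p$-cohomological dimension $1$), so $H^1(H, C) = 0$. Applying $H$-cohomology to the second short exact sequence and using the equality $C^H = \big(\oplus_{v \in S} J_v^\pm(E/F_\infty)\big)^H$, I obtain $Q^H \hookrightarrow H^1(H, C) = 0$, so $Q^H = 0$.

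To conclude, I dualize. Since $Q^\vee$ is a $\Lambda(G)$-submodule of $\big(\oplus_{v \in S} J_v^\pm(E/F_\infty)\big)^\vee$, which is finitely generated over the Noetherian ring $\Lambda(G)$ by local Tate duality and standard properties of local Iwasawa cohomology in the $\Z_p^2$-extension, $Q^\vee$ is itself finitely generated. The vanishing $Q^H = 0$ is equivalent to $Q^\vee = I_H Q^\vee$, where $I_H$ is the augmentation ideal of $H$ in $\Lambda(G)$, which lies in the maximal ideal of the local ring $\Lambda(G)$. Nakayama's lemma then yields $Q^\vee = 0$, so $Q = 0$ and $\lambda_\infty^\pm$ is surjective. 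The main obstacle is this final Nakayama step, since the cohomology chase only delivers $Q^H = 0$; the key input required is the finite generation of $\big(\oplus_{v \in S} J_v^\pm(E/F_\infty)\big)^\vee$ over $\Lambda(G)$, everything else being a formal diagram chase built around Lemmas \ref{lemma2} and \ref{jishnu}.
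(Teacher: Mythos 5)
Your proposal is correct, and it follows the same overall decomposition as the paper ($C$ and $Q$ are the paper's $A_\infty^\pm$ and $B_\infty^\pm$), but the chase you run is genuinely tighter in one place and heavier in another. The streamlining: you observe that $H^1(H,C)=0$ directly from Lemma~\ref{jishnu} together with $\mathrm{cd}_p(H)=1$, and then deduce $Q^H=0$ from the vanishing of $H^1(H,C)$ and the identity $C^H = \big(\oplus_{v\in S}J_v^\pm(E/F_\infty)\big)^H$ (itself a formal consequence of Lemma~\ref{lemma2}). The paper instead spends a paragraph establishing $H^1(H,J_v^\pm(E/F_\infty))=0$ (equation~\eqref{2.3}), which requires importing the vanishing of $H^1(G,J_v^\pm(E/F_\infty))$ from Proposition~5.14 of \cite{LeiSujatha} plus a Nakayama step, before reaching the same conclusion $(B_\infty^\pm)^H=0$. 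Since $H^1(H,A_\infty^\pm)=0$ already forces the cokernel's $H$-invariants to vanish, your version shows that the appeal to Proposition~5.14 of \cite{LeiSujatha} can be bypassed entirely; that is a genuine simplification.

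On the closing step you are heavier than the paper. You dualize and apply Nakayama over $\Lambda(G)$, which requires knowing that $\big(\oplus_{v\in S}J_v^\pm(E/F_\infty)\big)^\vee$ is a finitely generated $\Lambda(G)$-module; this is a standard fact in the setting at hand (finitely many primes of $F_\infty$ above each $v\in S$, decomposition-group induction, and finite generation of local Iwasawa cohomology), but it is an extra input you explicitly flag as the load-bearing ingredient. The paper's version of Nakayama is more elementary and self-contained: $B_\infty^\pm$ is a discrete $p$-primary module and $H\cong\Z_p$ is pro-$p$, so $(B_\infty^\pm)^H=0$ forces $B_\infty^\pm=0$ directly (for a nonzero discrete $p$-primary module with pro-$p$ group action, the $H$-invariants of the finite $H$-submodule generated by any nonzero element are already nonzero). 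No finite generation over $\Lambda(G)$ is needed. Net, your argument trades an external reference in the middle for an external reference at the end; both are valid, and you might combine the two by keeping your cleaner chase to $Q^H=0$ and then closing with the paper's discrete-module Nakayama, which gives the shortest self-contained proof.
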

\begin{proof}
	Let $A_\infty^\pm$ denote the image of the map $\lambda_\infty^\pm$. Therefore we have the short exact sequence 
	$$0 \rightarrow \Sel_p^\pm(E/F_\infty) \rightarrow H^1(G_S(F_\infty),E_{p^\infty}) \rightarrow A_\infty^\pm \rightarrow 0.$$
	Taking $H$-cohomology and writing the long exact sequence corresponding to the above short exact sequence and using Lemma \ref{jishnu}, we get that $$H^1(H, A_\infty^\pm) \cong H^2(H, \Sel_p^\pm(E/F_\infty)).$$ But $H \cong \Z_p$ and so $H^2(H, \Sel_p^\pm(E/F_\infty))=0$ which implies that $H^1(H, A_\infty^\pm)=0.$ Hochschild-Serre spectral sequence gives
	$$ 0 \rightarrow H^1(\Gamma, J_v^\pm(F_\infty)^H) \rightarrow H^1(G,J_v^\pm(F_\infty)) \rightarrow H^1(H, J_v^\pm(F_\infty))^\Gamma \rightarrow H^2(\Gamma, J_v^\pm(F_\infty)^H)=0.$$

By the proof of Proposition 5.14 of \cite{LeiSujatha}, we know that $H^1(G, J_v^\pm(F_\infty)) =0$ for all $v \in S$. 

$\Big($Note that the proof of Proposition 5.14 of \cite{LeiSujatha} does not need $\Sel_p(E/F)$ to be finite. It just needs the fact that $H^2(F_S/F_\cyc, E_{p^\infty})=0$  which is true in our case of imaginary quadratic field by Theorem \ref{leopoldt}$\Big)$.

This implies $H^1(H, J_v^\pm(F_\infty))^\Gamma=0$. Now $J_v^\pm(F_\infty)$ is $p$-primary and $H^1(H, J_v^\pm(F_\infty))$ is also $p$-primary and $\Gamma$ is pro-$p$. Therefore, by Nakayama lemma, $H^1(H, J_v^\pm(F_\infty))^\Gamma=0$ implies 
\begin{equation}\label{2.3}
 H^1(H, J_v^\pm(F_\infty))=0.
\end{equation}
Let $$ 0 \rightarrow A_\infty^\pm \rightarrow J_v^\pm(F_\infty) \rightarrow B_\infty^\pm \rightarrow 0$$ be exact where $B_\infty^\pm =\Coker(\lambda_\infty^\pm)$. 
Therefore, by Lemma \ref{lemma2} and equation \eqref{2.3}  we get that $(B_\infty^\pm)^H = H^1(H, A_\infty^\pm) =0$. Again since $J_v^\pm(F_\infty)$ is $p$-primary, $B_\infty^\pm$ is $p$-primary and $H$ is pro-$p$. This again implies that $B_\infty^\pm=0$ which completes the proof. 
\end{proof}
\begin{lemma}\label{Cor5.12}
	We have $H^1(H,\Sel_p^\pm(E/F_\infty))=0$.
\end{lemma}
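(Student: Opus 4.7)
The plan is to take the long exact sequence in $H$-cohomology coming from the short exact sequence
\[
0 \to \Sel_p^\pm(E/F_\infty) \to H^1(G_S(F_\infty), E_{p^\infty}) \xrightarrow{\lambda_\infty^\pm} \oplus_{v\in S} J_v^\pm(E/F_\infty) \to 0
\]
established in Proposition \ref{Prop:mithu}, and then sandwich $H^1(H, \Sel_p^\pm(E/F_\infty))$ between two groups already known to vanish. In effect, this is an immediate corollary of the pieces already assembled in the paper.

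Concretely, the first few terms of the long exact sequence read
\[
H^1(G_S(F_\infty), E_{p^\infty})^H \xrightarrow{\lambda_\infty^{H,\pm}} \bigl(\oplus_{v\in S} J_v^\pm(E/F_\infty)\bigr)^H \to H^1(H, \Sel_p^\pm(E/F_\infty)) \to H^1(H, H^1(G_S(F_\infty), E_{p^\infty})).
\]
Lemma \ref{lemma2} says exactly that $\lambda_\infty^{H,\pm}$ is surjective, so the connecting map $(\oplus_{v\in S} J_v^\pm(E/F_\infty))^H \to H^1(H, \Sel_p^\pm(E/F_\infty))$ is the zero map. Consequently $H^1(H, \Sel_p^\pm(E/F_\infty))$ injects into $H^1(H, H^1(G_S(F_\infty), E_{p^\infty}))$, which vanishes by Lemma \ref{jishnu}.

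No real obstacle appears; every input has already been proved. The only thing to be careful about is matching which exact sequence and which vanishing feed into which arrow of the long exact sequence. A tempting alternative route, namely using equation \eqref{2.3} to kill $H^1(H, \oplus_v J_v^\pm)$ directly, is less useful here because it only gives information about $H^2(H, \Sel_p^\pm)$, which already vanishes for free from $\mathrm{cd}_p(H) = 1$. So the argument really does go through the surjectivity of $\lambda_\infty^{H,\pm}$ combined with Lemma \ref{jishnu}, and this is exactly the chain of implications I would write down.
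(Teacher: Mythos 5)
Your argument is exactly the paper's: use Proposition \ref{Prop:mithu} to get the short exact sequence, note that the surjectivity of $\lambda_\infty^{H,\pm}$ from Lemma \ref{lemma2} kills the connecting map, and conclude via the vanishing of $H^1(H, H^1(G_S(F_\infty), E_{p^\infty}))$ from Lemma \ref{jishnu}. You merely spell out the long exact sequence a bit more explicitly than the paper does, but the ingredients and their order are identical.
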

\begin{proof}
It follows from Lemma 	\ref{lemma2}, that the map $\lambda_\infty^{H,\pm}$ is surjective. By Proposition \ref{Prop:mithu} we know that the map $$\lambda_\infty^\pm: H^1(G_S(F_\infty),E_{p^\infty}) \rightarrow \oplus_{v \in S}J_v^\pm(E/F_\infty)$$ is surjective. Therefore the following sequence is short exact.
$$0 \rightarrow \Sel_p^\pm(E/F_\infty) \rightarrow  H^1(G_S(F_\infty), E_{p^\infty}) \xrightarrow{\lambda_{\infty}^{\pm}} \oplus_{v \in S}J_v^{\pm}(E/F_\infty)\rightarrow 0.$$
As $\lambda_\infty^{H,\pm}$ is also surjective, taking $H$-cohomology of the above exact sequence we get $H^1(H,\Sel_p^\pm(E/F_\infty))=0$ since  $H^1(H,H^1(G_S(F_\infty),E_{p^\infty}))=0$ by Lemma \ref{jishnu}.

\end{proof}
	
Let $W$ be a finitely generated $\Lambda(G)$-module. We write $W(p)$ for the submodule of all elements of $W$ killed by some power of $p$, $W_f = W/W(p)$. The homology groups $H_i(G,W)$ are the Pontryagin duals of the cohomology groups $H^i(G, \widehat{W})$. As shown in \cite{Howson_alone}, the modules $H^i(G,W)$ are finitely generated $\Z_p$-modules and so $H_i(G, W(p))$ are finite groups for all $i \geq 0$. Following Coates-Schneider-Sujatha \cite[Equation 15, page 196]{CoatesSchneiderSujatha_Links_between} and \cite[Corollary 1.7]{Howson_alone}, we define the $\mu$-invariant of $W$ using the Euler characteristic $$p^{\mu_G(W)} = \prod_{i \geq 0} \#H_i(G, W(p))^{(-1)^i} = \chi(G, W(p)).$$

By Lemma \ref{Cor5.12}, $H^1(H, \Sel^{\pm}(E/F_\infty)) =0$ which amounts to say that $H_1(H, X_{\infty}^\pm) =0$. Now let us consider the following exact sequence 
$ 0 \rightarrow X_\infty^\pm(p) \rightarrow X_\infty^{\pm} \rightarrow X_{\infty,f}^\pm \rightarrow 0$ and take $H$-homology. We obtain 
\begin{equation}\label{2.4}
0 \rightarrow H_1(H,  X_{\infty,f}^\pm) \rightarrow H_0(H,X_\infty^\pm(p)) \rightarrow H_0(H,X_\infty^\pm) \rightarrow H_0(H,X_{\infty,f}^\pm) \rightarrow 0.
\end{equation}
Recall $\big( X_{\infty}^\pm\big)_H \cong X_\cyc^\pm$ (cf. Lemma \ref{lem1}) which is finitely generated and torsion $\Lambda(\Gamma)$-module. This implies that $H_0(H,X_{\infty,f}^\pm) $ is also finitely generated and torsion (cf. equation \eqref{2.4}). Also $H_0(H,X_\infty^\pm(p))$ is finitely generated and killed by some power of $p$ and hence so is $H_1(H,  X_{\infty,f}^\pm)$ from \eqref{2.4}. In fact, $X_{\infty,f}^\pm = p^rX_{\infty}^\pm$ for some $r \geq 0$ and hence $X_{\infty,f}^\pm$  is a submodule of $X_\infty^\pm$. Now as $H$ has cohomological dimension $1$ and $H_1(H,X_\infty^\pm)=0$, by Lemma \ref{Cor5.12}, this implies that $H_1(H,  X_{\infty,f}^\pm)=0$. This proves the following Lemma.
\begin{lemma}\label{lemma6}
	The module $H_0(H, X_{\infty,f}^\pm)$ is finitely generated torsion $\Lambda(\Gamma)$-module and $H_1(H, X_{\infty,f}^\pm)=0$.
\end{lemma}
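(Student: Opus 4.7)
The plan is to apply $H$-homology to the tautological short exact sequence
$$0 \to X_\infty^\pm(p) \to X_\infty^\pm \to X_{\infty,f}^\pm \to 0$$
and exploit two already-established inputs. Because $p$ is odd and $H \cong \Z_p$, the group $H$ has $p$-cohomological dimension $1$, so $H_i(H,-) = 0$ for all $i \geq 2$. By Pontryagin duality, Lemma \ref{Cor5.12} reads $H_1(H, X_\infty^\pm) = 0$. Together these collapse the long exact sequence of $H$-homology into precisely the four-term exact sequence \eqref{2.4}.

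For the finiteness-and-torsion assertion, I would read everything off the right half of \eqref{2.4}. By Lemma \ref{lem1}, $H_0(H, X_\infty^\pm) = (X_\infty^\pm)_H$ is Pontryagin dual to $\Sel_p^\pm(E/F_\infty)^H \cong \Sel_p^\pm(E/F_\cyc)$, so it equals $X_\cyc^\pm$. Theorem \ref{torsion} says this is a finitely generated torsion $\Lambda(\Gamma)$-module, and its quotient $H_0(H, X_{\infty,f}^\pm)$ inherits both properties.

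For the vanishing $H_1(H, X_{\infty,f}^\pm) = 0$, the cleanest route is to realize $X_{\infty,f}^\pm$ as a $\Lambda(G)$-submodule of $X_\infty^\pm$. Since $X_\infty^\pm$ is finitely generated over the Noetherian ring $\Lambda(G)$, the submodule $X_\infty^\pm(p)$ is also finitely generated, hence annihilated by some $p^r$; multiplication by $p^r$ then factors through an isomorphism $X_{\infty,f}^\pm \xrightarrow{\sim} p^r X_\infty^\pm \subset X_\infty^\pm$. Applying $H$-homology to the short exact sequence $0 \to p^r X_\infty^\pm \to X_\infty^\pm \to X_\infty^\pm / p^r X_\infty^\pm \to 0$ and using $H_2(H, -) = 0$ yields an injection $H_1(H, X_{\infty,f}^\pm) \hookrightarrow H_1(H, X_\infty^\pm)$, and the target is zero by Lemma \ref{Cor5.12}.

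The only point requiring care is the Pontryagin-duality bookkeeping — translating $H^i(H, \Sel_p^\pm(E/F_\infty))$ to $H_i(H, X_\infty^\pm)$ and matching $H$-invariants of the Selmer group with $H$-coinvariants of its dual — together with the verification that $X_\infty^\pm(p)$ is bounded-exponent, where Noetherianity of $\Lambda(G)$ is used. Beyond these, no further cohomological vanishing is required.
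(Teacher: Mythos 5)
Your proof follows the paper's own argument: take $H$-homology of $0\to X_\infty^\pm(p)\to X_\infty^\pm\to X_{\infty,f}^\pm\to 0$, use $\mathrm{cd}_p(H)=1$ and $H_1(H,X_\infty^\pm)=0$ (dual form of Lemma~\ref{Cor5.12}) to obtain the four-term sequence~\eqref{2.4}, deduce that $H_0(H,X_{\infty,f}^\pm)$ is a finitely generated torsion $\Lambda(\Gamma)$-module as a quotient of $(X_\infty^\pm)_H\cong X_\cyc^\pm$, and kill $H_1(H,X_{\infty,f}^\pm)$ by identifying $X_{\infty,f}^\pm\cong p^rX_\infty^\pm\subset X_\infty^\pm$. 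You merely spell out two steps that the paper leaves implicit: that $X_\infty^\pm(p)$ has bounded exponent (finitely generated over the Noetherian ring $\Lambda(G)$), and the precise long-exact-sequence argument giving the injection $H_1(H,X_{\infty,f}^\pm)\hookrightarrow H_1(H,X_\infty^\pm)=0$.
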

In the following,  we are going to relate the $\mu$-invariant of $X_\infty^\pm$ and $X_\cyc^\pm$ and their Euler characteristics. 
\begin{theorem}\label{mainTheoremSupersingular}
We have the following relation between $\mu$-invariants. $$\mu_G(X_\infty^\pm) = \mu_\Gamma(X_\cyc^\pm) - \mu_\Gamma(H_0(H, X_{\infty,f}^\pm)).$$
\end{theorem}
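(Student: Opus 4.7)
The plan is to unwind the Euler-characteristic definition
\begin{equation*}
p^{\mu_G(X_\infty^\pm)} = \chi(G, X_\infty^\pm(p))
\end{equation*}
by running the Hochschild--Serre spectral sequence in homology for the normal subgroup $H \subset G$ with quotient $\Gamma$, applied to the $p$-primary $\Lambda(G)$-module $M := X_\infty^\pm(p)$:
\begin{equation*}
E^2_{i,j} = H_i(\Gamma, H_j(H,M)) \Longrightarrow H_{i+j}(G,M).
\end{equation*}
Since $H \cong \Z_p$ has $p$-cohomological dimension $1$, only $j \in \{0,1\}$ contribute. Using that each $H_n(G,M)$ is finite (the point of passing to $W(p)$ in the definition of $\mu$, via Howson's result), multiplicativity of Euler characteristics across a spectral sequence with finite terms collapses the above to
\begin{equation*}
\chi(G,M) = \frac{\chi(\Gamma,\,H_0(H,M))}{\chi(\Gamma,\,H_1(H,M))}.
\end{equation*}

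The first key step is to kill the denominator. I would take $H$-homology of the short exact sequence $0 \to X_\infty^\pm(p) \to X_\infty^\pm \to X_{\infty,f}^\pm \to 0$: since $H_2(H, X_{\infty,f}^\pm) = 0$ by $\mathrm{cd}_p(H)=1$ and $H_1(H, X_\infty^\pm) = 0$ by Lemma \ref{Cor5.12}, the long exact sequence sandwiches $H_1(H, X_\infty^\pm(p))$ between two zeros, forcing it to vanish. The same long exact sequence, now using $H_1(H, X_{\infty,f}^\pm)=0$ from Lemma \ref{lemma6} and the identification $H_0(H, X_\infty^\pm) \cong X_\cyc^\pm$ from Lemma \ref{lem1}, produces the crucial short exact sequence
\begin{equation*}
0 \to H_0(H, X_\infty^\pm(p)) \to X_\cyc^\pm \to H_0(H, X_{\infty,f}^\pm) \to 0
\end{equation*}
of finitely generated $\Lambda(\Gamma)$-modules, all torsion by Theorem \ref{torsion} (since subquotients of a finitely generated torsion module are again finitely generated torsion).

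To finish, I would observe that $H_0(H, X_\infty^\pm(p))$ is itself $p$-primary, being a quotient of a $p$-primary module; hence $\mu_\Gamma$ of it is computed directly by $\log_p \chi(\Gamma, -)$ with no further $p$-truncation needed. Combining this with the collapsed spectral sequence gives
\begin{equation*}
\mu_G(X_\infty^\pm) = \log_p \chi(\Gamma,\,H_0(H, X_\infty^\pm(p))) = \mu_\Gamma\bigl(H_0(H, X_\infty^\pm(p))\bigr),
\end{equation*}
and additivity of $\mu_\Gamma$ on the short exact sequence above rearranges to the desired identity
$\mu_G(X_\infty^\pm) = \mu_\Gamma(X_\cyc^\pm) - \mu_\Gamma(H_0(H, X_{\infty,f}^\pm))$. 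The main obstacle is bookkeeping rather than anything deep: one must verify finiteness at each $E^2$-position so that the spectral-sequence Euler characteristic calculation is legitimate, and check that the definitional formula $p^{\mu_\Gamma(N)} = \chi(\Gamma, N(p))$ reduces, on the $p$-primary module $N = H_0(H, X_\infty^\pm(p))$, to $\chi(\Gamma, N)$ itself. Both points are routine given the hypotheses already in place.
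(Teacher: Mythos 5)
Your proposal is correct and follows essentially the same line as the paper's proof: Hochschild--Serre in $H$-homology to collapse $\chi(G, X_\infty^\pm(p))$ to $\Gamma$-level Euler characteristics, the two vanishings $H_1(H, X_\infty^\pm(p)) = 0$ and $H_1(H, X_{\infty,f}^\pm) = 0$ established from Lemmas \ref{Cor5.12} and \ref{lemma6} together with $\mathrm{cd}_p(H)=1$, the identification $(X_\infty^\pm)_H \cong X_\cyc^\pm$, and additivity of $\mu_\Gamma$ on the resulting short exact sequence. The only (cosmetic) difference is ordering: you establish the vanishings first and then apply additivity to a three-term sequence, while the paper applies additivity to the full four-term sequence \eqref{2.4} and eliminates the extra $\delta,\epsilon$ terms afterward.
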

\begin{proof}
	By Lemma \ref{lemma6}, \eqref{2.4} is an exact sequence of finitely generated $\Lambda(\Gamma)$-torsion modules. Also $(X_\infty^\pm)_H \cong X_\cyc^\pm$ which implies $$\mu_\Gamma(H_0(H,X_\infty^\pm)) = \mu_\Gamma(X_\cyc^\pm).$$ By Hochschild-Serre, 
	$$0 \rightarrow H_0(\Gamma, H_1(H, X_\infty^\pm(p))) \rightarrow H_1(G,X_\infty^\pm(p)) \rightarrow H_1(\Gamma,H_0(H,X_\infty^\pm(p))) \rightarrow 0$$ is exact and $$H_2(G, X_\infty^\pm(p)) \cong H_{1}(\Gamma,H_1(H, X_\infty^\pm(p))).$$ As $G$  is a $p$-adic Lie group of dimension $2$, $H_i(G, X_\infty^\pm(p)) =0$ for $i > 2$. Therefore we have the equality of the following Euler characteristic formula
	$$\chi(G, X_\infty^\pm(p)) = \prod_{i=0}^1 \chi(\Gamma, H_i(H, X_\infty^\pm(p)))^{(-1)^i},$$
	and this   implies 
	$$\mu_G(X_\infty^\pm) = \prod_{i=0}^1(-1)^i\mu_\Gamma(H_i(H,X_\infty^\pm(p))).$$
	But by \eqref{2.4}, as $\mu$ is additive along finitely generated torsion $\Lambda(\Gamma)$-modules, $$\mu_\Gamma(H_1(H, X_{\infty,f}^\pm)) + \mu_\Gamma(X_\cyc^\pm) = \mu_\Gamma(H_0(H,X_\infty^\pm(p))) + \mu_\Gamma(H_0(H,X_{\infty,f}^\pm)).$$
	Therefore, $$\mu_G(X_\infty^\pm)= \mu_\Gamma(X_\cyc^\pm) +\delta +\epsilon,$$
	where $ \delta = \mu_\Gamma(H_1(H, X_{\infty,f}^\pm)) - \mu_\Gamma(H_0(H, X_{\infty,f}^\pm))$  and $\epsilon = -\mu_\Gamma(H_1(H, X_\infty^\pm(p)))$. Note that since $H_1(H, X_{\infty,f}^\pm)=0$  by Lemma \ref{lemma6}, $\delta = - \mu_\Gamma(H_0(H, X_{\infty,f}^\pm))$.
	
	In the following, we are going to show that $\epsilon=0$. By Lemma \ref{Cor5.12}, $H_i(H,X_\infty^\pm)=0$ for all $i \geq 1$. Taking $H$-homology of the following exact sequence 
	$$0 \rightarrow X_\infty^\pm(p) \rightarrow X_\infty^\pm \rightarrow X_{\infty,f}^\pm \rightarrow 0,$$
	we obtain $$H_2(H, X_{\infty,f}^\pm) \cong H_1(H, X_\infty^\pm(p)).$$ But $H_2(H,X_{\infty,f}^\pm) =0$ which gives $H_1(H,X_\infty^\pm(p)) =0$ and hence $\epsilon =0$. This shows that $ \mu_G(X_\infty^\pm) = \mu_\Gamma(X_\cyc^\pm) + \delta =  \mu_\Gamma(X_\cyc^\pm) - \mu_\Gamma(H_0(H, X_{\infty,f}^\pm))$ which completes the proof of our main Theorem.
\end{proof}
This gives us the following Corollary.
\begin{corollary}\label{cor:mu}
Suppose $\mu_\Gamma(X_\cyc^\pm)=0$, then $\mu_G(X_\infty^\pm)=0$.
\end{corollary}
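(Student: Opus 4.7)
The plan is to derive this corollary as an immediate sign/positivity consequence of Theorem \ref{mainTheoremSupersingular}. Substituting the hypothesis $\mu_\Gamma(X_\cyc^\pm) = 0$ into the formula
\[
\mu_G(X_\infty^\pm) = \mu_\Gamma(X_\cyc^\pm) - \mu_\Gamma(H_0(H, X_{\infty,f}^\pm))
\]
collapses it to $\mu_G(X_\infty^\pm) = -\mu_\Gamma(H_0(H, X_{\infty,f}^\pm))$, so the whole task reduces to showing the right-hand side is non-positive while the left-hand side is non-negative, forcing both to vanish.

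The key step is invoking non-negativity of the $\mu$-invariant for finitely generated torsion modules over the relevant Iwasawa algebras. By Theorem \ref{torsion}, $X_\infty^\pm$ is finitely generated torsion over $\Lambda(G)$, so $\mu_G(X_\infty^\pm) \ge 0$ by the Euler characteristic definition $p^{\mu_G(X_\infty^\pm)} = \chi(G, X_\infty^\pm(p))$ (a product of orders of finite groups, hence a non-negative power of $p$). Similarly, by Lemma \ref{lemma6} the $\Lambda(\Gamma)$-module $H_0(H, X_{\infty,f}^\pm)$ is finitely generated torsion, so $\mu_\Gamma(H_0(H, X_{\infty,f}^\pm)) \ge 0$. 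Combining these two inequalities with the displayed identity forces $\mu_G(X_\infty^\pm) = 0$ (and, as a byproduct, $\mu_\Gamma(H_0(H, X_{\infty,f}^\pm)) = 0$).

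I do not anticipate any substantive obstacle; the heavy lifting was already completed in Theorem \ref{mainTheoremSupersingular}, and the corollary reduces to a one-line positivity argument. The only point worth noting is that one should check the $\mu$-invariant of $H_0(H, X_{\infty,f}^\pm)$ is genuinely well-defined as a $\Lambda(\Gamma)$-invariant, which is exactly the content of Lemma \ref{lemma6}.
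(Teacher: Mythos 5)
Your proof is correct and is essentially the paper's intended argument: the corollary is stated without an explicit proof immediately after Theorem \ref{mainTheoremSupersingular}, and the only ingredient needed beyond that formula is the non-negativity of $\mu$-invariants for finitely generated torsion modules, which you correctly invoke on both sides. One small caveat in your parenthetical justification: the Euler characteristic $\chi(G, X_\infty^\pm(p)) = \prod_i \#H_i(G, X_\infty^\pm(p))^{(-1)^i}$ is an \emph{alternating} product of orders of finite groups, so it is not visibly $\geq 1$ from the formula alone; that $\mu_G \geq 0$ (and likewise $\mu_\Gamma \geq 0$) comes from the structure theory of finitely generated torsion modules over $\Lambda(G)$ and $\Lambda(\Gamma)$ (cf.\ Howson, cited in the paper), not merely from finiteness of the homology groups. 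This does not affect the validity of the argument, since that non-negativity is standard, but the stated reason is imprecise.
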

\begin{remark}\label{rem}
Note that in \cite{LoefflerZerbes1}, Loeffler and Zerbes constructed two variable $p$-adic $L$-functions $L_p^{+,+}$ and $L_p^{-,-}$ over $\Z_p^2$-extension such that the ideal generated by them is the characteristic ideal of $X_\infty^+$ and $X_\infty^-$ respectively. This extension of the classical Iwasawa main conjecture for the cyclotomic extension to the case of $\Z_p^2$-extension is conjectured due to Kim (see Conjecture $3.1$ in section 3 of \cite{Kim} and note that our $X_\infty^+$ is Kim's $X^{+/+}$ and our $X_\infty^-$ is Kim's $X^{-/-}$). Therefore, via Iwasawa main conjecture, if the $\mu_\Gamma$-invariants of the classical cyclotomic $p$-adic L-functions $L_p^\pm$ (cyclotomic Iwasawa main conjecture is $(L_p^\pm)= \mathrm{char}(X_\cyc^\pm)$) are zero, then by Corollary \ref{cor:mu}, the $\mu_G$-invariants of the Loeffler-Zerbes' two variable $p$-adic L-functions $L_p^{+,+}$ and $L_p^{-,-}$ should also be zero. (The Euler systems for the imaginary quadratic field case are given in \cite{LLZ1}).
\end{remark}

In the following, we recall the equivalent version of $\mathfrak{M}_H(G)$-conjecture for the supersingular case. This conjecture 
appeared in Section 5 of \cite{LeiZerbes1} and Conjecture 4.14 of \cite{LeiLim} in the supersingular case. See Proposition 4.14 of \cite{LeiLim} for a criterion when the $\mathfrak{M}_H(G)$-conjecture is valid. (Also see \cite{CFKSV} and \cite{CoatesSujatha1} for the case when $E$ has good ordinary reduction at $p$).
\begin{conjecture}[$\mathfrak{M}_H(G)$-conjecture]\label{ConjMH}
	Under the notations of Section \ref{main},  $X_{\infty,f}^\pm$ is finitely generated $\Lambda(H)$-module.
\end{conjecture}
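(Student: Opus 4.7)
The plan is to give a proof of Conjecture \ref{ConjMH} that is conditional on the vanishing $\mu_\Gamma(X_\cyc^\pm) = 0$ (a supersingular analogue of Greenberg's $\mu$-conjecture, known for many elliptic curves via work of Pollack, Kurihara, Kim, and others). The strategy is to feed this hypothesis into the $\mu$-invariant formula of Theorem \ref{mainTheoremSupersingular} and then promote the resulting finiteness statement to a finite generation statement via topological Nakayama.

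First, I would apply Corollary \ref{cor:mu} to deduce $\mu_G(X_\infty^\pm) = 0$ from $\mu_\Gamma(X_\cyc^\pm) = 0$. Plugging both vanishings into the identity of Theorem \ref{mainTheoremSupersingular} forces
\[
\mu_\Gamma\bigl(H_0(H, X_{\infty,f}^\pm)\bigr) = 0.
\]
Lemma \ref{lemma6} already ensures that $H_0(H, X_{\infty,f}^\pm)$ is a finitely generated torsion $\Lambda(\Gamma)$-module, so vanishing of its $\mu$-invariant is equivalent to it being finitely generated as a $\Z_p$-module; reducing once more modulo $p$ produces a finite $\F_p$-vector space.

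Next I would upgrade this to finite generation of $X_{\infty,f}^\pm$ over $\Lambda(H)$. Since $H \cong \Z_p$, the Iwasawa algebra $\Lambda(H) \cong \Z_p[[T_H]]$ is a complete local Noetherian ring with maximal ideal $\mathfrak{m}_H = (p, T_H)$ and residue field $\F_p$, and $X_{\infty,f}^\pm$ is compact as a $\Lambda(G)$-module, hence compact as a $\Lambda(H)$-module. Using $T_H X_{\infty,f}^\pm = I_H X_{\infty,f}^\pm$ topologically, we obtain
\[
X_{\infty,f}^\pm \big/ \mathfrak{m}_H X_{\infty,f}^\pm \;=\; H_0(H, X_{\infty,f}^\pm) \big/ p\, H_0(H, X_{\infty,f}^\pm),
\]
which by the previous step is finite. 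Topological Nakayama for compact modules over complete local rings then yields that $X_{\infty,f}^\pm$ is finitely generated over $\Lambda(H)$, which is exactly Conjecture \ref{ConjMH}.

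The main obstacle is that this whole strategy is conditional on $\mu_\Gamma(X_\cyc^\pm) = 0$, an open problem in the supersingular setting. An unconditional argument would instead require bounding $\mu_\Gamma(H_0(H, X_{\infty,f}^\pm))$ directly, presumably by producing explicit $\Lambda(H)$-generators for $X_{\infty,f}^\pm$ from the Beilinson--Flach Euler system of \cite{LLZ1} in the imaginary quadratic setting and controlling their image under $H$-coinvariants; this is the step where I expect the real difficulty to lie, since it amounts to making the Loeffler--Zerbes two-variable $p$-adic $L$-functions $L_p^{+,+}, L_p^{-,-}$ of Remark \ref{rem} enter the picture in a quantitative way.
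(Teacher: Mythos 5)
The statement you are addressing is not a theorem of the paper but a conjecture: the paper never proves Conjecture \ref{ConjMH}, and indeed its Theorem \ref{thm:mufixed} only establishes that the conjecture is \emph{equivalent} to the equality $\mu_G(X_\infty^\pm)=\mu_\Gamma(X_\cyc^\pm)$, which is left open. Your proposal does not close this gap, because it is explicitly conditional on $\mu_\Gamma(X_\cyc^\pm)=0$. That hypothesis is itself an open conjecture in this setting: the vanishing results you invoke (Pollack, Kurihara, Kim) concern particular families, largely for $E/\Q$ over $\Q_\cyc$ and often on the analytic side, and no unconditional vanishing of the algebraic $\mu$-invariant of $X_\cyc^\pm$ over the cyclotomic $\Z_p$-extension of an imaginary quadratic field is available. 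So the missing idea is precisely the essential content: you have assumed a statement at least as deep as the one to be proved, and the unconditional strategy you sketch at the end (extracting $\Lambda(H)$-generators from the Beilinson--Flach Euler system and the Loeffler--Zerbes $p$-adic $L$-functions) is only a plan with no step carried out; it would in any case require a two-variable main conjecture together with a $\mu=0$ input that the paper does not supply.

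That said, the conditional deduction you give is correct, but it is already contained in the paper's own results and follows the same route. From $\mu_\Gamma(X_\cyc^\pm)=0$, Theorem \ref{mainTheoremSupersingular} gives both $\mu_G(X_\infty^\pm)=0$ (this is Corollary \ref{cor:mu}) and, since $\mu_\Gamma(H_0(H,X_{\infty,f}^\pm))\geq 0$, the vanishing $\mu_\Gamma(H_0(H,X_{\infty,f}^\pm))=0$; Lemma \ref{lemma6} plus the structure theorem then makes $H_0(H,X_{\infty,f}^\pm)$ finitely generated over $\Z_p$, and your identification
\[
X_{\infty,f}^\pm/\mathfrak{m}_H X_{\infty,f}^\pm \cong H_0(H,X_{\infty,f}^\pm)/p\,H_0(H,X_{\infty,f}^\pm)
\]
followed by topological Nakayama over $\Lambda(H)\cong\Z_p[[T]]$ is exactly the converse half of the paper's proof of Theorem \ref{thm:mufixed}. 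In other words, what you have proved is the implication ``$\mu_\Gamma(X_\cyc^\pm)=0$ implies the $\mathfrak{M}_H(G)$-conjecture,'' which is an immediate corollary of Corollary \ref{cor:mu} and Theorem \ref{thm:mufixed}; it is a reasonable observation, but it is not a proof of Conjecture \ref{ConjMH}, which remains open both in the paper and in your proposal.
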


	\begin{theorem}\label{thm:mufixed}
		The $\mathfrak{M}_H(G)$-conjecture is true if and only if $\mu_G(X_\infty^\pm) = \mu_\Gamma(X_\cyc^\pm)$.
	\end{theorem}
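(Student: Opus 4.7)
The plan is to derive the equivalence directly from Theorem \ref{mainTheoremSupersingular}, which says
$$\mu_G(X_\infty^\pm) = \mu_\Gamma(X_\cyc^\pm) - \mu_\Gamma(H_0(H, X_{\infty,f}^\pm)).$$
Writing $(X_{\infty,f}^\pm)_H$ for $H_0(H, X_{\infty,f}^\pm) = X_{\infty,f}^\pm/I_H X_{\infty,f}^\pm$, the equality $\mu_G(X_\infty^\pm) = \mu_\Gamma(X_\cyc^\pm)$ is therefore equivalent to $\mu_\Gamma((X_{\infty,f}^\pm)_H) = 0$. The whole theorem then reduces to the equivalence: $X_{\infty,f}^\pm$ is finitely generated over $\Lambda(H)$ if and only if $\mu_\Gamma((X_{\infty,f}^\pm)_H) = 0$.

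For the $(\Rightarrow)$ direction, if $X_{\infty,f}^\pm$ is generated over $\Lambda(H)$ by finitely many elements $x_1, \ldots, x_n$, their images in $(X_{\infty,f}^\pm)_H$ generate it over $\Lambda(H)/I_H \cong \Z_p$, so $(X_{\infty,f}^\pm)_H$ is a finitely generated $\Z_p$-module. Any finitely generated $\Z_p$-module, viewed as a $\Lambda(\Gamma)$-module, has $\mu_\Gamma = 0$: in the elementary decomposition of a finitely generated torsion $\Lambda(\Gamma)$-module, a $\Lambda(\Gamma)/(p^a)$-summand is automatically infinite over $\Z_p$ (already $\Lambda(\Gamma)/(p) \cong \mathbb{F}_p[[T]]$ has infinite $\Z_p$-rank), so its presence is incompatible with finite $\Z_p$-generation.

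For the $(\Leftarrow)$ direction, Lemma \ref{lemma6} provides that $(X_{\infty,f}^\pm)_H$ is finitely generated and torsion over $\Lambda(\Gamma)$, and combined with $\mu_\Gamma((X_{\infty,f}^\pm)_H) = 0$, the same elementary-divisor argument forces $(X_{\infty,f}^\pm)_H$ to be finitely generated over $\Z_p$; in particular $(X_{\infty,f}^\pm)_H / p (X_{\infty,f}^\pm)_H$ is finite. Identifying this quotient with $X_{\infty,f}^\pm / \mathfrak{m}_H X_{\infty,f}^\pm$ for the maximal ideal $\mathfrak{m}_H = (p, T)$ of the complete local Noetherian ring $\Lambda(H) \cong \Z_p[[T]]$, and observing that $X_{\infty,f}^\pm$ is a compact $\Lambda(H)$-module (since $X_\infty^\pm$ is Pontryagin dual to a discrete module and $X_\infty^\pm(p)$ is a closed submodule of the Noetherian $\Lambda(G)$-module $X_\infty^\pm$), the topological Nakayama lemma promotes the finiteness of the residue module to finite generation of $X_{\infty,f}^\pm$ over $\Lambda(H)$, which is precisely the $\mathfrak{M}_H(G)$-conjecture. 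The main technical ingredient is this topological Nakayama step; the rest is a bookkeeping exercise with the structure theorem for $\Lambda(\Gamma)$-modules, made possible here by the fact that $H \cong \Z_p$ has dimension one and is commutative, in contrast with the non-commutative setting of \cite{CoatesSchneiderSujatha_Links_between} where only one direction is accessible.
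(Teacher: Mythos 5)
Your proposal is correct and takes essentially the same route as the paper: both reduce the equivalence to the statement that $X_{\infty,f}^\pm$ is finitely generated over $\Lambda(H)$ if and only if $\mu_\Gamma\bigl((X_{\infty,f}^\pm)_H\bigr)=0$, with the forward direction via ``finitely generated over $\Z_p$ implies $\mu_\Gamma = 0$'' and the converse via the structure theorem for $\Lambda(\Gamma)$-torsion modules together with (topological) Nakayama for $\Lambda(H)\cong \Z_p[[T]]$. Your write-up is slightly more explicit than the paper's about the compactness of $X_{\infty,f}^\pm$ and the identification $X_{\infty,f}^\pm/\mathfrak{m}_H X_{\infty,f}^\pm \cong (X_{\infty,f}^\pm)_H / p$, but the underlying argument is the same.
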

\begin{proof}
	Suppose $\mathfrak{M}_H(G)$-conjecture is true, then $H_0(H, X_{\infty,f}^\pm)$ is a finitely generated $\Z_p$-module and hence has $\mu_\Gamma$-invariant zero. Therefore, by Theorem  \ref{mainTheoremSupersingular}, we obtain $\mu_G(X_\infty^\pm) = \mu_\Gamma(X_\cyc^\pm)$. 
	
	Conversely, suppose $\mu_G(X_\infty^\pm) = \mu_\Gamma(X_\cyc^\pm)$, then by Theorem  \ref{mainTheoremSupersingular},  $\mu_\Gamma(H_0(H, X_{\infty,f}^\pm))=0$. From the exact sequence $(X_\infty^\pm)_H \rightarrow (X_{\infty,f}^\pm)_H \rightarrow 0$, since $(X_\infty^\pm)_H\cong X_\cyc^\pm$ is $\Lambda(\Gamma)$-torsion, we deduce that $(X_{\infty,f}^\pm)_H $ is also $\Lambda(\Gamma)$-torsion. Now since $\mu_\Gamma((X_{\infty,f}^\pm)_H )=0$, by the structure theorem, this implies that $(X_{\infty,f}^\pm)_H$ is a finitely generated $\Z_p$-module. As $H \cong \Z_p$, by Nakayama's Lemma, we have $X_{\infty,f}^\pm$ is finitely generated over $\Lambda(H)$.
\end{proof}

\section*{Acknowledgement}
The author would like to thank Antonio Lei and Sujatha Ramdorai for sending him their article \cite{LeiSujatha} and for their encourangement and support throughout this work. The author also thanks Meng Fai Lim for useful discussions and for reading an earlier draft of this article and suggesting several improvements. Thanks are also due to the anonymous referee for his/her careful reading and for pointing our that certain assumptions on an earlier draft of our article were unnecessary. The author also acknowledges the postdoc fellowship support from PIMS-CNRS. 

\bibliographystyle{alpha}
\bibliography{base_change_Iwahori,p_rational_field,Iwawasa_algebra_and_cornut,IwasawaPG}

\newcommand{\etalchar}[1]{$^{#1}$}
\begin{thebibliography}{CFK{\etalchar{+}}05}

\bibitem[CFK{\etalchar{+}}05]{CFKSV}
John Coates, Takako Fukaya, Kazuya Kato, Ramdorai Sujatha, and Otmar Venjakob.
\newblock The {$\rm GL_2$} main conjecture for elliptic curves without complex
  multiplication.
\newblock {\em Publ. Math. Inst. Hautes \'{E}tudes Sci.}, (101):163--208, 2005.

\bibitem[Coa99]{Coates_Fragments}
John Coates.
\newblock Fragments of the {${\rm GL}_2$} {I}wasawa theory of elliptic curves
  without complex multiplication.
\newblock In {\em Arithmetic theory of elliptic curves ({C}etraro, 1997)},
  volume 1716 of {\em Lecture Notes in Math.}, pages 1--50. Springer, Berlin,
  1999.

\bibitem[CS12]{CoatesSujatha1}
J.~Coates and R.~Sujatha.
\newblock On the {$\mathfrak{M}_H(G)$}-conjecture.
\newblock In {\em Non-abelian fundamental groups and {I}wasawa theory}, volume
  393 of {\em London Math. Soc. Lecture Note Ser.}, pages 132--161. Cambridge
  Univ. Press, Cambridge, 2012.

\bibitem[CSS03]{CoatesSchneiderSujatha_Links_between}
John Coates, Peter Schneider, and Ramdorai Sujatha.
\newblock Links between cyclotomic and {${\rm GL}_2$} {I}wasawa theory.
\newblock {\em Doc. Math.}, (Extra Vol.):187--215, 2003.
\newblock Kazuya Kato's fiftieth birthday.

\bibitem[How02]{Howson_alone}
Susan Howson.
\newblock Euler characteristics as invariants of {I}wasawa modules.
\newblock {\em Proc. London Math. Soc. (3)}, 85(3):634--658, 2002.

\bibitem[Kim14]{Kim}
Byoung~Du Kim.
\newblock Signed-{S}elmer groups over the {$\Bbb{Z}_p^2$}-extension of an
  imaginary quadratic field.
\newblock {\em Canad. J. Math.}, 66(4):826--843, 2014.

\bibitem[Kob03]{Kobayashi}
Shin-ichi Kobayashi.
\newblock Iwasawa theory for elliptic curves at supersingular primes.
\newblock {\em Invent. Math.}, 152(1):1--36, 2003.

\bibitem[LL20]{LeiLim}
Antonio Lei and Meng~Fai Lim.
\newblock {Akaski series of signed Selmer groups of elliptic curves with
  semistable reduction at primes above $p$}.
\newblock {\em Arxiv preprint}, 2020.
\newblock Weblink: \url{https://arxiv.org/pdf/2001.09304.pdf} \EatDot.

\bibitem[LLZ15]{LLZ1}
Antonio Lei, David Loeffler, and Sarah~Livia Zerbes.
\newblock Euler systems for modular forms over imaginary quadratic fields.
\newblock {\em Compos. Math.}, 151(9):1585--1625, 2015.

\bibitem[LS19]{LeiSujatha}
Antonio Lei and Ramdorai Sujatha.
\newblock {On Selmer groups in the supersingular reduction case}.
\newblock {\em To appear in Tokyo Journal of Mathematics}, 2019.
\newblock Weblink:
  \url{http://antoniolei.com/wp-content/uploads/2019/12/leisuj12.pdf} \EatDot.

\bibitem[LS20]{LeiSprung}
Antonio Lei and Florian Sprung.
\newblock {Ranks of elliptic curves over {$\Z_p^2$} extensions}.
\newblock {\em Arxiv version of the paper published in Israel Journal of
  Mathematics, 236 (1), 183–206}, 2020.
\newblock Weblink: \url{https://arxiv.org/pdf/1809.10127.pdf} \EatDot.

\bibitem[LZ12]{LeiZerbes1}
Antonio Lei and Sarah~Livia Zerbes.
\newblock Signed {S}elmer groups over {$p$}-adic {L}ie extensions.
\newblock {\em J. Th\'{e}or. Nombres Bordeaux}, 24(2):377--403, 2012.

\bibitem[LZ14]{LoefflerZerbes1}
David Loeffler and Sarah~Livia Zerbes.
\newblock Iwasawa theory and {$p$}-adic {$L$}-functions over
  {$\Bbb{Z}_p^2$}-extensions.
\newblock {\em Int. J. Number Theory}, 10(8):2045--2095, 2014.

\end{thebibliography}
\end{document}